\newtheorem{definition}{Definition}
\newtheorem{theorem}{Theorem}
\newcommand{\blockcomment}[1]{}
\title{Bravais Lattices for Euclidean Degree Efficient Polynomial Interpolation
}
\author{
  R. Connor Greene \\
  NJIT \\
  Newark, NJ\\
  \texttt{rcg4@njit.edu} \\
}
\begin{document}
\maketitle

\begin{abstract}
A method is presented for forming polynomial interpolants on squares and cubes, which are more efficient in the so-called Euclidean degree than other commonly used methods with the same number of collocation points. These methods have several additional desirable properties. The interpolants can be formed and evaluated via the FFT and have a minimally growing Lebesgue constant. The associated points achieve Gauss-Lobatto order accuracy in integration, out-performing tensor product Gauss-Legendre integration for many $C_\infty$ functions.

This method is related to prior work on total degree efficient collocation points by Yuan Xu et al.\cite{pad_interp_xu_de_marchi, fcc_integrate_xu}
\end{abstract}

\section{Introduction}
Polynomial interpolation is a ubiquitous tool in numerical analysis. This paper will be concerned with Chebyshev polynomial interpolants on domains of the form $[-1, 1]^d$, and in particular the unit square and cube. The application of Chebyshev interpolation in one dimension is well-understood and is a popular basis for spectral methods in the numerical solution of ordinary differential equations ~\cite{ORSZAG198070, Driscoll2014}. In higher dimensions, it is common to use a tensor-product grid of one-dimensional Chebyshev interpolation points. But this practice is not necessarily the most efficient for interpolation and integration; the central idea of the present paper is a re-examination of Chebyshev interpolation methods in light of observations by L. N. Trefethen in the 2017 papers \cite{tref_isotropy} and \cite{trefEucDegree}.

These papers concern the notion of isotropy in collocation points. Often in numerical contexts, the domain of interest represents a subset of some larger space. In this case, there is little reason to expect a function to have a special orientation relative to the domain. Anisotropic collocation points produce orientation dependent truncation errors, which worsens the performance on average. Trefethen goes on to show that for multivariate polynomial interpolation in a rotationally-invariant class of analytic functions, the worst-case truncation error is based on the lowest {\em Euclidean degree} polynomial excluded from the basis. A monomial of the form $\prod_j x_j^{n_j}$ has Euclidean degree $\|\mathbf{n}\|_2$. This stands in contrast to the more commonly used maximal degree and total degree defined, respectively, as $\|\mathbf{n}\|_\infty$ and $\|\mathbf{n}\|_1$. 

The maximal degree basis arises naturally when using tensor product grids for interpolation. Historically, the total degree basis has been studied as an efficient alternative to the maximal degree basis. In two dimensions, there are examples\cite{morrow-patterson-points} of collocation points for integrating total degree polynomials dating back to the Morrow-Patterson points. More recently, the Padua points~\cite{PadOriginal} were the first known example of unisolvent points for interpolating total degree polynomials in 2 dimensions. These points were then explained using ergodic theory as an arc-length uniform sampling along a generating curve.~\cite{pad_interp_generate} This approach has been applied in other contexts as well, but in this paper we will consider an alternative formulation.

The Padua points can be constructed as a mapping of a toroidal lattice to the unit square. This formulation of the Padua points was discovered shortly after the original paper~\cite{pad_interp_xu_de_marchi} and was used to generalize the total degree collocation to higher dimensions~\cite{fcc_integrate_xu,total_degree_cube_demarchi}. It has since been applied in other settings~\cite{Cools2011}. We will apply this framework for finding optimal Euclidean degree collocation points.

Additionally, previous work on this topic focuses on optimizing point count to the exclusion of other considerations. This includes both previous work concerning total degree bases, and more recent work\cite{hecht2020multivariate} in the Euclidean degree. The results of this work will include algorithms for computing interpolant coefficients, derivatives, and integrals with proportionally fewer flops relative to tensor product Chebyshev methods. This means that when using the provided algorithms, one can expect these more efficient collocation points to translate to reductions in computation time.

The present work is divided into four broad sections. The preliminaries will lay out the theoretical concepts underpinning the methods. The Optimal Lattices section will describe the specific lattices considered. The algorithms section will describe how to implement these lattices efficiently. Finally, numerical results are included to show the efficacy of these methods.

In addition to this paper, there is code which implements the algorithms described here along with demonstration code used to produce some of the figures. The full python codebase can be found at \url{https://github.com/rcgreene/ilfft_interp}.

\section{Preliminaries}
This section is mostly review of necessary information for the methods in the later sections. Readers familiar with this material will find new results in \cref{sec:grid-proof} and \cref{sec:comp_lats}. Similar framework for lattice derived collocation points can be found in the work of Yuan Xu.\cite{fcc_integrate_xu}
\subsection{Map from the Circle to the Interval} \label{subsec-map}
Let $\mathbb{S}_1$ denote the circle, defined as a periodic interval $[-\pi, \pi]$.  We will frequently make use of the map $\gamma: \mathbb{S}_1 \rightarrow [-1, 1]: \gamma(\theta) = \cos(\theta)$. This map is surjective to the interval, but is not one-to-one. We will need to make use of a well-defined inverse map $\gamma^{-1}$, for which we define a new domain.

\begin{definition}{Chebyshev Torus}

Let $\mathbb{HT}$ denote the quotient space of $\mathbb{S}_1$ with the additional equivalence $\theta = -\theta$. This modifies the circle by adding symmetry conditions at the points $\theta = 0, \pm n \pi$. We will refer to this domain as the Chebyshev Torus, due to its relationship to Chebyshev polynomials.
\end{definition}

Analytic functions on $\mathbb{S}_1$ can be defined as sums $f_S(\theta) = c_0 + \sum_{k=1}^\infty c_{k} e^{ik\theta} + c_{-k} e^{-ik \theta}$, with $c_{\pm k}$ decaying rapidly with $k$. Analytic functions on $\mathbb{HT}$ can be defined similarly, but with the restriction that $f(-\theta) = f(\theta)$. This is satisfied when $c_{k} = c_{-k}$ for all $k$, which is equivalent to $f_{HT}(\theta) = c_0 + \sum_{k = 1}^{\infty} c_k \cos(k \theta)$. For convenience, we will often consider the $\mathbb{HT}$ basis in terms of the $\mathbb{S}_1$ basis. In such cases, the cardinality of the $\mathbb{HT}$ basis is that of the non-negative indexed components of the associated $\mathbb{S}_1$ basis.
\begin{figure}[H]
\centering
\includegraphics[width=.5\textwidth]{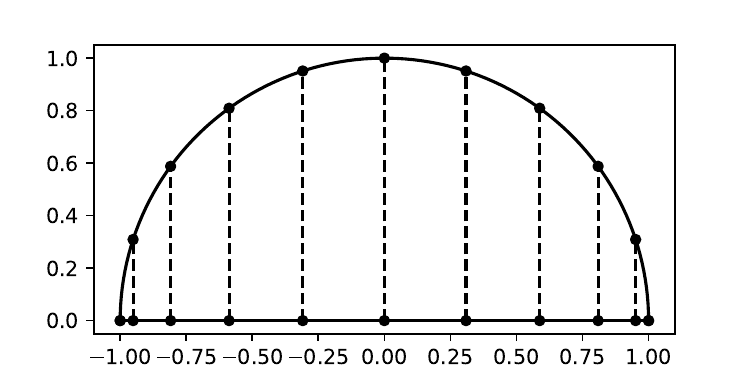}
\caption{11th order Chebyshev points of the Second kind mapped on the interval, and a semi-circular path. Note the uniform distribution of the nodes on the semi-circle.}
\end{figure}
\subsection{Chebyshev Polynomials}

The Chebyshev polynomials of the first kind are the set of orthogonal polynomials on $[-1, 1]$ with respect to the weight $\frac{1}{\sqrt{1 - x^2}}$. They are most easily represented as:
\begin{equation}
T_n(x) = \cos(n \cos^{-1}(x))
\end{equation}
We can see that these functions are polynomials through induction:
\begin{align}
T_0(x) &= \cos(0) = 1 \notag \\
T_1(x) &= \cos(\cos^{-1}(x)) = x \notag \\
\end{align}
And through the application of trigonometric identities, we can derive the recurrence relation: 
\begin{equation}
T_{n+1}(x) = 2xT_{n}(x) - T_{n-1}(x) \notag 
\end{equation}

Further properties of the Chebyshev polynomials can be derived through consideration of the mapping $\gamma$ defined in \cref{subsec-map}. If we take $\theta = \gamma^{-1}(x)$ on $\mathbb{HT}$, then we induce a mapping from the set of Chebyshev polynomials $T_n(x)$ to the cosine basis $\cos(n \theta)$. The orthogonality properties of the Chebyshev polynomials follow easily from considering the $L_2$ inner product of cosines on $\mathbb{HT}$, with $\mu = \frac{1}{\sqrt{1 - x^2}}$ arising from the derivative of $\gamma^{-1}$. 

The most important consequence of this analogy for our purposes comes from considering trapezoidal integration on $\mathbb{S}_1$. It is a well-known fact of approximation theory that the $n$-point trapezoidal rule on a periodic domain is accurate for $e^{ikx}$ with $|k| < n$. This fact is important for interpolation, because it establishes a parity between the integration rule and interpolation at the point set. If $f$ is a function defined on $\mathbb{S}_1$ given by $f(x) = \sum_{k < \left\lfloor\frac{n}{2}\right\rfloor} c_k e^{ikx}$, then it is possible to interpolate $f$ by computing coefficients $c_k = \frac{\langle f, e^{ikx} \rangle}{\left\lVert{e^{ikx}}\right\rVert^2}$. Because there are $n$ basis functions and $n$ points, this method produces the unique interpolant for the Fourier basis at the trapezoidal point set for any function $f$. On $\mathbb{HT}$, these properties still hold except that we take the basis to be the set of cosines and the collocation points must respect the reflective symmetry of the domain.

Traditionally, the Chebyshev points are derived by reference to the roots or critical points of the Chebyshev polynomials. But the torus analogy suffices to show that they offer optimal integration accuracy with respect to the Chebyshev weight, as well as to explain perhaps the most crucial property of the Chebyshev nodes. Polynomial interpolation at any of these node sets is equivalent to computing a cosine transform over equispaced points and in particular is amenable to the Fast Fourier Transform. This allows for the mostly unique ability of Chebyshev algorithms to form interpolants in $O(n \log n)$ time. This also has implications for the truncation error of these interpolants as Fourier series determined with equispaced points are well known\cite{bari1964treatise} to have minimally decaying Lebesgue constants of order $\log n$.

\subsection{Discrete Cosine Transform} \label{sec:DCT}
\begin{figure}

\centering
\includegraphics[width=.95\linewidth]{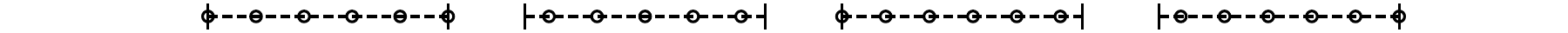}
\caption{From left to right: example point distributions for DCT I, II, $\mathrm{V}_-$, and $\mathrm{V}_+$.}\label{p_types}
\begin{tabular}{|c|c|c|}
\hline 
Chebyshev Point Set & End Points & DCT type \\
\hline
$\cos\left(\pi \dfrac{i}{n - 1}\right)$ for $i = 0\dots n-1$ & $\pm 1$ & $\mathrm{I}$ \\ \hline
$\cos\left(\pi \dfrac{2i + 1}{2n}\right)$ for $i = 0\dots n-1$ & $\emptyset$ & $\mathrm{II}$ \\ \hline
$\cos\left(\pi \dfrac{2i}{2n - 1}\right)$ for $i = 0\dots n-1$ & $1$ & $\mathrm{V}_-$ \\ \hline
$\cos\left(\pi \dfrac{2i + 1}{2n - 1}\right)$ for $i = 0 \dots n - 1$ & $-1$ & $\mathrm{V}_+$ \\ \hline 
\end{tabular}
\end{figure}

As a preliminary to describing the algorithms used here, we will first discuss the 4 types of one dimensional Chebyshev nodes used for Bravais lattices from an algorithmic perspective. All of these points have the same point density, but are distinguished whether the point sets include $\pm 1$. Each of these point sets allow for finding interpolants with a DCT type I, II, or V, depending on the boundary conditions. The information is summarized in~\cref{p_types}. The type I, II, and V Discrete Cosine Transforms compute the following sums over a set of uniform sampling points:
\begin{align}
\mathrm{DCT}[f, x](k) &= a_k \sum_0^{n - 1} c_i f(x_i) \cos(k x_i) \\
c_i &= \left\{ \begin{array}{ll} \frac{1}{2} & x_i = 0, \pi \\ & \\
1  & x_i \neq 0, \pi \end{array} \right. \notag \\
a_k &= \left\{ \begin{array}{ll} \frac{1}{\pi n} & k = 0; \, \mathrm{or} \, -1, 1 \in x \, \mathrm{and} \, k=n-1\\ & \\
\frac{2}{\pi n} & \mathrm{otherwise} \end{array} \right.
\end{align}
This is equivalent to computing a series of inner products over the chosen point set, with the exception of the $k = n-1$ mode of the DCT I. For this case there is an additional normalizing factor, because the type I point set is only accurate up to $k = 2n - 3$. The actual computations are performed through an adaptation of the Fast Fourier Transform. When used with Chebyshev nodes, the $x_i$ are taken to be the equally spaced points obtained by the $\gamma^{-1}$ mapping to $\mathbb{HT}$. While equispaced points on $\mathbb{S}_1$ make up infinite one parameter families, the reflective symmetry of $\mathbf{D}_1$ allows for only 4 distinct distributions for a given point count $n$. Of these sets, only the first two are frequently encountered in numerical methods. The set containing $\pm 1$ is referred to as the Chebyshev points of the second kind and the set without endpoints are the Chebyshev points of the first kind.

While the sums computed by the type I, II, and V DCTs are conceptually similar, the algorithmic details are markedly different. Because of this, FFT code must implement each of them separately. In the case of the type V DCT, although it can be implemented efficiently in principle, most commonly used FFT packages\cite{FFTW.jl-2005} do not implement it. For this reason, we recommend avoiding the asymmetric Chebyshev lattices when possible.

\subsection{Chebyshev Polynomials in Higher Dimensions} \label{sec:cheb_in_high}
In higher dimensions, we generalize the Chebyshev mapping through the tensor product. For each dimension $x_i$, we have $\theta_i = \cos^{-1}(x_i)$.
We will consider the standard $d$-dimensional torus $S_1^d$ as the periodic interval $[-\pi, \pi]^d$. We will often consider the full torus for the sake of expedience, but in fact the $\cos^{-1}(x)$ mapping does not map to the full torus, but to the semi-torus $[0, \pi]^d$. The mapping from $f$ to $\tilde{f}$ defines a function over the full domain through the addition of symmetry across the planes $x_i = 0$. This additional symmetry has implications in how best to represent functions on the torus and what sets of interpolation points respect the symmetry of the domain.\\

For $S_1^d$, the natural basis for analytic functions is $B_{\mathbf{k}}(\boldsymbol{\theta}) = e^{i\langle \mathbf{k}, \mathbf{\theta} \rangle}$ for $\mathbf{k} \in \mathbb{Z}^d$. For the Chebyshev basis, we consider a subset obtained by symmetrizing the basis functions. We have $C_{\mathbf{k}}(\boldsymbol{\theta}) = \prod^d_{i = 1} \cos(k_i \theta_i)$. The set $C$ is related to $B$ by:
\begin{equation}
 C_{\mathbf{k}}(\boldsymbol{\theta}) = \sum_{\boldsymbol{\sigma}} B_{\boldsymbol{\sigma} \mathbf{k}}(\boldsymbol{\theta}) : \boldsymbol{\sigma} \in \{-1, 1\}^d
\end{equation}
This basis can be difficult to work with directly, so it is often expedient to consider $C$ merely as a subset of $B$. \\

One complication in higher dimensions is the notion of polynomial degree. In one dimension, the degree of a polynomial is the exponent of the leading coefficient. For this reason, all polynomial bases are identical in span for the purposes of one dimensional interpolation. In higher dimensions, monomials are written as a product of $d$ factors $x_i^{n_i}$. Our notion of degree is a partial ordering of the set of degree vectors $\mathbf{n} \in \mathbb{Z}_+^{d}$. Commonly used notions are the maximum degree $\lVert\mathbf{n}\rVert_\infty = \max(\mathbf{n})$ and the total degree $\lVert\mathbf{n}\rVert_1 = \sum n_i$. The total degree deserves some special acknowledgment, polynomials do not change in total degree under linear transformation so there is some natural reason to prefer this notion of degree.

For analytic functions on the $d$-cube, the degree we will prefer is the Euclidean degree $\lVert\mathbf{n}\rVert_2 = \sqrt{\sum_i \mathbf{n}_i^2}$. A recent result by Trefethen~\cite{trefEucDegree} suggests that this provides the most consistent bound for the truncation error of an interpolant. His results involve a slight change in the typical notion of analyticity used for functions defined on the cube, but we will attempt to justify this choice in the numerical results.

\subsection{Euclidean Degree Efficiency}
Consider a set of collocation points $S$ in $[-1, 1]^n$ along with a set of Chebyshev polynomials $T$ such that there for any $f : [-1, 1]^n \rightarrow \mathbb{R}$, there is a unique polynomial $p$ in the span of $T$ satisfying $f(x_i) = p(x_i)$ for $x_i \in S$. Because we are concerned with finding points suitable for a Euclidean degree basis, we will define a notion of basis efficiency in the following way:
\begin{equation}
E_{S} = \min_{p_{\mathbf{k}} \notin T}\frac{ V_n\left(\lVert \mathbf{k} \rVert_2 \right)}{2^n |S|} \label{eq:gen_eff}
\end{equation}
Where $\mathbf{k}$ is a multi-index for the Chebyshev polynomial degrees in each direction (as defined in \cref{sec:cheb_in_high}) and $V_n$ is the volume of an $n$-ball as a function of the radius. Intuitively, this represents the ratio of the largest Euclidean degree basis contained in $S$ (approximated by a hemi-spherical volume) to the total number of points in $S$. If the truncation error of a set of points is dependent on the Euclidean degree of the basis, then the the efficiency of a node set $E_S$ represents the smallest node set relative to $|S|$ that could obtain a similar truncation error. Because we will go on to construct more efficient node sets than the tensor product Chebyshev nodes, it is worth determining how much it is possible to save.

A tensor product Chebyshev node set contains $(n+1)^3$ nodes, where $n$ is the maximal degree norm of the tensor product basis. The lowest Euclidean degree excluded from this basis happens to be $n + 1$, so we have:
\begin{align}
E^n_{C} &= \frac{V_n(n + 1)}{2^n (n + 1)^n} \label{eq:cart_eff} \\
E^2_{C} &= \frac{\pi}{4} \approx .785  \label{eq:2_eff}\\
E^3_{C} &= \frac{\pi}{6} \approx .524 \label{eq:3_eff}
\end{align}
The efficiency of the tensor product Chebyshev points decreases quickly with dimension, but for 2 and 3 dimensions we can reduce the point count of the Chebyshev points by about 20\% and 50\% respectively.
\subsection{Lattices and Aliasing}
\begin{figure}[H]
\centering
\includegraphics[width=.4\textwidth]{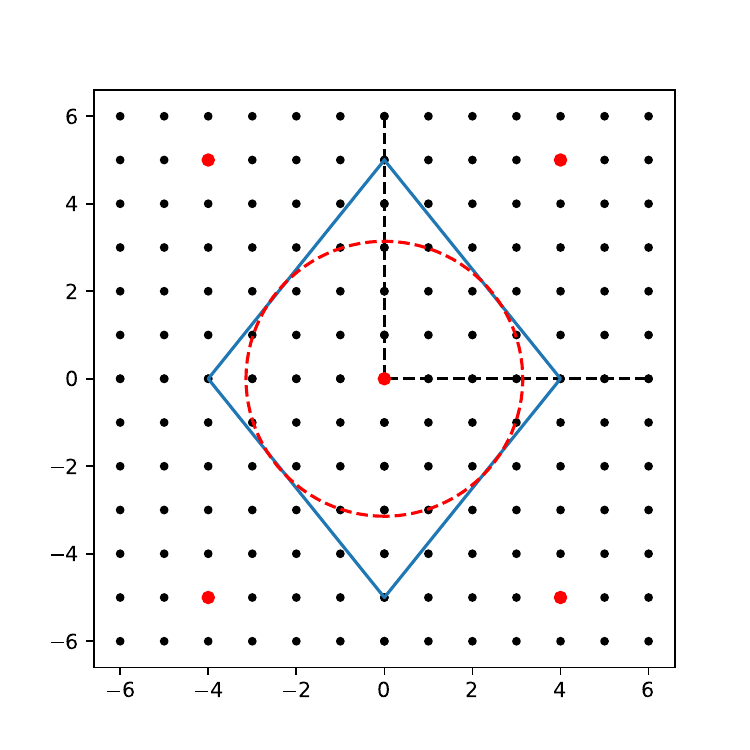}
\includegraphics[width=.4\textwidth]{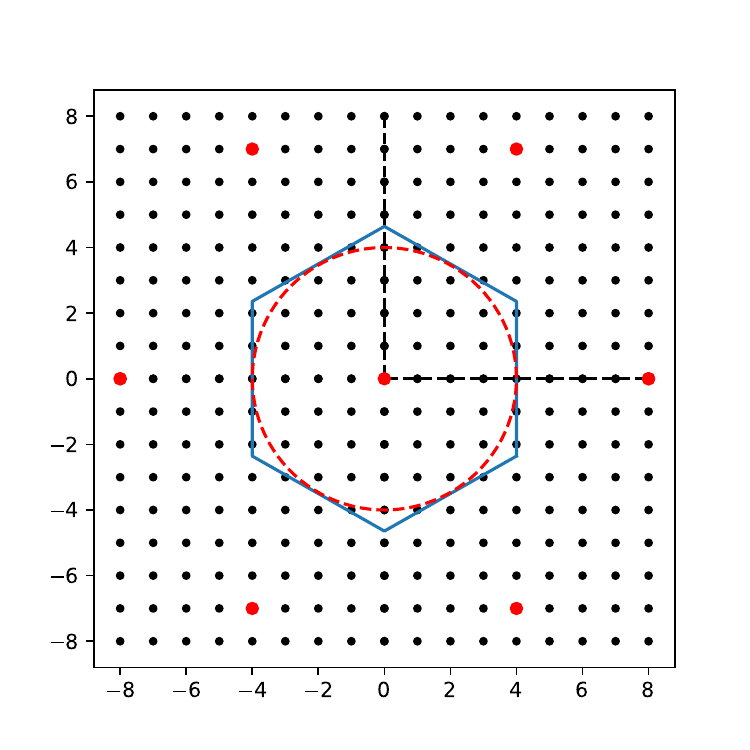}
\caption{Reciprocal lattices to the Padua points and Hex points respectively. Note the circle representing the Euclidean degree and the red dots representing aliasing frequencies.} \label{fig:recip}
\end{figure}
\begin{definition}[Bravais Lattice] \label{bravais-conditions}
A set of points obtained from the orbit of a single point acted on by a translation group. In $d$-dimensions the group can be generated by $d$ basis vectors. The points of a Bravais lattice can be given as:
\begin{equation}
\boldsymbol\theta = P \mathbf{z} + \boldsymbol{\theta}_0
\end{equation}
for $\mathbf{z} \in \mathbb{Z}^d$ and $P$ a $d \times d$ matrix with columns given by linearly independent lattice basis vectors. If $\theta_1$ and $\theta_2$ are elements of a Bravais lattice, then we may refer to $\boldsymbol\theta_1 - \boldsymbol\theta_2$ as a separation vector of the lattice. The set of separation vectors between points in a lattice is closed under addition and subtraction. For any lattice element $\boldsymbol\theta$ and separation vector $\mathbf{v}$, $\boldsymbol\theta + \mathbf{v}$ is also a lattice element.
\end{definition}

Given a Bravais Lattice with basis vectors given by the columns of $P$, we define the reciprocal lattice in terms of $Q$ satisfying $PQ^T = 2 \pi I$. This lattice has the property that for any elements $\mathbf{k} = Q \mathbf{z}_1$ and $\mathbf{\theta} = P \mathbf{z}_2 + \mathbf{\theta}_0$, $\mathbf{z}_1, \mathbf{z_2} \in \mathbb{Z}^d$. We have:
\begin{equation}
e^{i\langle \mathbf{k}, \boldsymbol{\theta} \rangle} = e^{i \langle \mathbf{k}, \boldsymbol{\theta}_0 \rangle}
\end{equation}
This property is a generalization of aliasing in one-dimension. Elements of the reciprocal lattice are represented in the basis space as functions which are identically constant when evaluated at each lattice point. This property is fundamentally related to a crucial numerical property of Bravais lattices---Gauss-Legendre integration accuracy.

Consider the sum:
\begin{equation}
S = \sum_{p \in P} e^{i\langle k, p \rangle}
\end{equation}
Because of the translational symmetry of this lattice we have that for $p_0 \in P$:
\begin{align}
S &= \sum_{p \in P} e^{i \langle k, p_0 + p \rangle} \notag \\
S &= e^{i\langle k, p_0 \rangle}\sum_{p \in P} e^{i \langle k, p \rangle} \notag \\
S &= e^{i\langle k, p_0 \rangle} S
\end{align}
It follows as an immediate consequence that if $\mathbf{k}$ is not an element of the reciprocal lattice, then $S = 0$. The one dimensional version of this proof is commonly used for proving spectral accuracy for trapezoidal integration in periodic functions. Another consequence of this property is that the trapezoidal integral stencil can accurately calculate inner products of two polynomials up to degree $n - 1$. This is provably the best case for an orthogonal basis set because it implies that the interpolating polynomial for $f$ over $P$ can be determined by computing a series of inner products. If it were possible to integrate even one degree more accurately, then this method of computing inner products would allow for fitting an interpolant with more functions than the number of points in $P$.

So far, this section has discussed lattices in $\mathbb{R}^d$. In $\mathbb{S}_1^d$, lattices behave analogously with two differences. Because of the periodicity of the torus, the basis set of complex exponentials is discrete. For a torus with fundamental domain $[-\pi, \pi]^d$, $e^{ikx}$ is single-valued only for $k \in \mathbb{Z}^d$. Separation vectors $\mathbf{u}$ in a finite lattice are also constrained such that each $\mathbf{u}_i = \frac{a}{b} \pi$ for $a, b \in \mathbb{Z}$. Choosing lattice vectors is equivalent to choosing aliasing frequencies, so it can be helpful to design real lattices by choosing an appropriate reciprocal lattice. Bravais lattices on $\mathbb{HT}^d$ have an additional symmetry requirement. Reflective symmetry in the torus means that if two elements in a Bravais lattice are separated by a vector $\mathbf{u}$, then any vector obtained by inverting signs of $\mathbf{u}$ is also a Lattice vector.

These constraints dramatically reduce the search space for good lattices. There are a finite number of viable configurations in a given dimension and each one only has $d$ degrees of freedom with domain $\mathbb{N}$. In 2 dimensions, lattices can either be tensor product lattices with basis vectors $[\pi/{n_1}, 0], [0, \pi/{n_2}]$ or Padua-type lattices with basis vectors $[{\pm \pi/{n_1}, \pi/n_2}]$. Three dimensional lattices similarly have only a fundamentally distinct configurations. There are tensor product, body-centered cubic, face-centered cubic, and the tensor product of a one dimensional lattice with a Padua-type lattice. Of the three dimensional lattices, only the BCC and FCC are of particular theoretical interest.

\subsection{Special Properties of Chebyshev Lattices} \label{sec:grid-proof}

Bravais lattices on $\mathbb{HT}^n$ have additional symmetries that are useful for developing algorithms and finding optimal lattices for interpolation. The reflective symmetry of $\mathbb{HT}^n$ implies that if $(x_1, \dots, x_n)$ is a lattice vector, then every vector $(\pm x_1, \dots, \pm x_n)$ obtained by choosing an arbitrary sign for each element is also a lattice vector. From this we will establish a few important properties of Chebyshev lattices.

We define a Cartesian lattice on $\mathbb{HT}^n$ to be the tensor product of axis aligned 1 dimensional lattices. The basis for such lattices can be expressed as $a_i \hat{\mathbf{x}}_i$, where $a_i = 2\pi/n_i$ is the minimal separation between two lattice points along the $i$th axis. Aside from the choice of $a_i$ 1 dimensional lattices of this kind are distinguished by whether or not they contain the points 0 or $\pi$. If $n_i$ is odd, the lattice must contain one of 0 and $\pi$. If $n_i$ is even, it may contain both or neither.
\begin{theorem} \label{thm:cheb_bra}
The following statements are true:
\begin{enumerate}
\item Every Bravais lattice on $\mathbb{HT}^n$ can be written as a union of $2^m$ Cartesian lattices where $m \in [0, 1, \dots, n-1]$. 
\item Every Bravais lattice on $\mathbb{HT}^n$ is a subset of a Cartesian lattice.
\end{enumerate}
\end{theorem}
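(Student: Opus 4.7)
The plan is to work with the group $L$ of separation vectors of the lattice. By the reflective symmetry of $\mathbb{HT}^n$ noted at the start of \cref{sec:grid-proof}, $L$ is closed under arbitrary coordinate sign flips, and since the full Bravais lattice is a coset $\boldsymbol{\theta}_0 + L$ (and any translate of a Cartesian lattice is Cartesian), it suffices to prove both statements for $L$ itself.

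For statement 2, let $\pi_j(L) \subset \mathbb{R}/2\pi\mathbb{Z}$ denote the projection onto the $j$th coordinate. Given any $\mathbf{v}\in L$ with $v_j = x$, flipping only the $j$th sign produces another element of $L$ whose $j$th component is $-x$, so $\pi_j(L)$ is itself reflection-symmetric and hence a valid 1D lattice on $\mathbb{HT}^1$. The product $P := \pi_1(L)\times\cdots\times\pi_n(L)$ is then a Cartesian lattice containing $L$, which settles the second claim.

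For statement 1, let $a_j$ generate $\pi_j(L)$ and pick any $\mathbf{v}\in L$ with $v_j = a_j$. Subtracting its $j$th sign flip from $\mathbf{v}$ produces the axial vector $2 a_j \hat{\mathbf{x}}_j \in L$. Let $S$ denote the Cartesian sublattice generated by $\{2 a_j \hat{\mathbf{x}}_j\}_{j=1}^n$; then $S\subseteq L\subseteq P$, and $P/S \cong \prod_j \langle a_j\rangle/\langle 2a_j\rangle$ embeds into $(\mathbb{Z}/2\mathbb{Z})^n$ since each factor has order $1$ or $2$. Consequently $L/S$ is an elementary abelian $2$-group, $[L:S] = 2^{m}$ for some $m \geq 0$, and $L$ is the union of $2^m$ translates of $S$, each itself Cartesian.

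It remains to bound $m$. If $L = P$, then $L$ is already Cartesian and we take $m=0$ with no coset decomposition; otherwise $L\subsetneq P$, so $[L:S] < [P:S]$, and since both indices are powers of two with $[P:S] \leq 2^n$, this forces $[L:S] \leq 2^{n-1}$. The main subtlety to guard against is the factor of two: the correct axial generator of $L$ is $2a_j$, not $a_j$, because $a_j\hat{\mathbf{x}}_j$ need not lie in $L$ (the Padua lattice is the canonical example, where purely axial separations have coefficient $2$ in units of $a_j$). This factor of two is precisely what permits up to $2^{n-1}$ Cartesian cosets but no more, and verifying it is where the reflective symmetry of $\mathbb{HT}^n$ does its essential work.
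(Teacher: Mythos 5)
Your proof is correct, and although it turns on the same key mechanism as the paper's---combining a separation vector with its coordinate sign flip to manufacture axial lattice vectors---the packaging is genuinely different. The paper proceeds by explicit basis reduction: it extracts minimal axial separations $a_i\hat{\mathbf{x}}_i$, reduces a generating set modulo these to vectors $\mathbf{w}_i$ with components in $[0,a_j)$, uses reflections to force each component to be $0$ or $a_j/2$, exhibits the superlattice generated by $\tfrac{a_i}{2}\hat{\mathbf{x}}_i$ for statement 2, and finishes statement 1 with an iterative doubling argument to conclude $|L|=|L^+|/2^k$. You instead sandwich the separation group as $S\subseteq L\subseteq P$ with $P=\prod_j\pi_j(L)$ and $S=\prod_j\langle 2a_j\rangle$, and read everything off the index $[P:S]\le 2^n$: statement 2 is immediate from the projections, and statement 1 follows from Lagrange's theorem applied to $L/S\le P/S\hookrightarrow(\mathbb{Z}/2\mathbb{Z})^n$, with the $L=P$ versus $L\subsetneq P$ dichotomy giving the bound $m\le n-1$. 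Your route buys a tighter superlattice (the minimal Cartesian lattice containing $L$, rather than the possibly twice-as-fine one the paper constructs) and replaces the paper's somewhat informal doubling step with a clean subgroup-index computation; the paper's route, in exchange, produces the explicit normal form of a generating set (axial vectors plus half-offset vectors), which is what its later algorithmic sections actually use. One caveat you share with the paper: the cosets of $S$ (respectively $L^-$) inside the point set are translates of a tensor-product grid and need not individually satisfy the reflection symmetry demanded of a lattice on $\mathbb{HT}^n$, so ``Cartesian lattice'' in statement 1 must be read as ``translate of a tensor-product lattice''; since the paper's own proof makes the identical identification, this is not a gap attributable to your argument.
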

\begin{proof}
Let L be a Bravais lattice on $\mathbb{HT}^n$. For an axis $i$ and an element $\ell \in L \cap [0, \pi]^n$, the separation between $\ell$ and its reflections along the planes with normal $\hat{\mathbf{x}}_i$ at $x_i = 0, \pi$ are $2\ell_i$ and $2\pi - 2\ell_i$ respectively. For each axis $i$ let $a_i$ be the smallest non-zero positive value obtained in this way. Then $a_i \hat{\mathbf{x}}_i$ is an axis align lattice vector. Let $\mathbf{v}_i$ for $i \in [1, \dots, n]$ be a basis for $L$'s lattice vectors. Then by adding linear, integer combinations of vectors $a_i \hat{\mathbf{x}}_i$ to a given $\mathbf{v}_i$ we can produce a new vector $\mathbf{w}_i$ such that each $w_{i, j}$ satisfies $0 \leq w_{i, j} < a_j$. The combined set of vectors $\mathbf{w}$ and $a_i \mathbf{x}_i$ are a generating set for $L$'s lattice vectors. 

Using the reflective symmetry of $\mathbb{HT}^n$, we can reflect a vector $w_i$ about each axis except axis $j$ to produce $\mathbf{w}_i^*$. We have that $\mathbf{w}_i + \mathbf{w}_i^* = 2w_{i, j} \hat{\mathbf{x}}_j$. Because $0 \leq w_{i, j} < a_j$, we have that $|2 w_{i, j} - a_j| < a_j$. This implies that $w_{i,j} = a_j/2$, or $0$, otherwise $\mathbf{w}_i + \mathbf{w}_i^*$ can be used to find a lattice point closer to a $j$-axis reflecting plane than $a_j/2$ which contradicts our assumptions. It follows that every entry of $\mathbf{w}_i$ is $0$ or $a_j/2$ for any basis constructed in this manner.

From this, we see that each non-Cartesian Bravais lattice $L$ on $\mathbb{HT}^n$ has a largest Cartesian subset with minimal separation vectors $a_i \hat{\mathbf{x}}_i$ and the basis for $L$ can be written as a combination of $n$ vectors consisting of some combination of the axis-aligned vectors $a_i \hat{\mathbf{x}}_i$ and vectors $\mathbf{w}_i$ with at least 2 non-zero components $w_{i,j} = a_j/2$. From this, we see immediately that $L$ is a subset of the Cartesian lattice with basis vectors $\frac{a_i}{2} \hat{\mathbf{x}}_i$. 

Let $L^+$ denote the Cartesian super lattice defined in this way, and let $L^- + \ell_i$ denote the maximal Cartesian sublattice of $L$ containing $\ell_i$. It is a trivially-shown property of lattices that if $L^- + \ell_i \neq L^- + \ell_j$, then $L^- + \ell_i \cap L^- + \ell_j = \emptyset$. If $L$ is a proper subset of $L^+$, then there is some $j$ such that $\ell_i + \frac{a_j}{2} \hat{\mathbf{x}}_j \notin L$ for any $\ell_i$. The set of points $\tilde{L}$ obtained from the disjoint union $L \cup L + \frac{a_j}{2} \mathbf{x}_j$ has twice as many points as $L$. And because $a_j \hat{\mathbf{x}}_j$ is a lattice vector of $L$, for every point $\ell_i \in \tilde{L}$,  $\ell_i + \frac{a_j}{2} \hat{\mathbf{x}}_j \in \tilde{L}$, i.e. $\tilde{L}$ is a lattice. We can also see that it is a subset of $L^+$. This procedure can be repeated as long as $L$ is a proper subset of $L^+$. This doubling process implies that any such lattice $L$ must have a point count equal to $|L^+|/2^k$ for some $k$. By definition $L^-$ has a point count of $|L^-| = |L^+| / 2^d$. Therefore $|L| = 2^{m}|L^-|$ for $0 \leq m < d$. This implies the first statement of the hypothesis, because $L$ is a union of $2^m$ translated copies of $L^-$. 
\end{proof}

\subsection{Composite Lattices} \label{sec:comp_lats}
\begin{figure}[H]
    \centering
    \includegraphics[width=.4\textwidth]{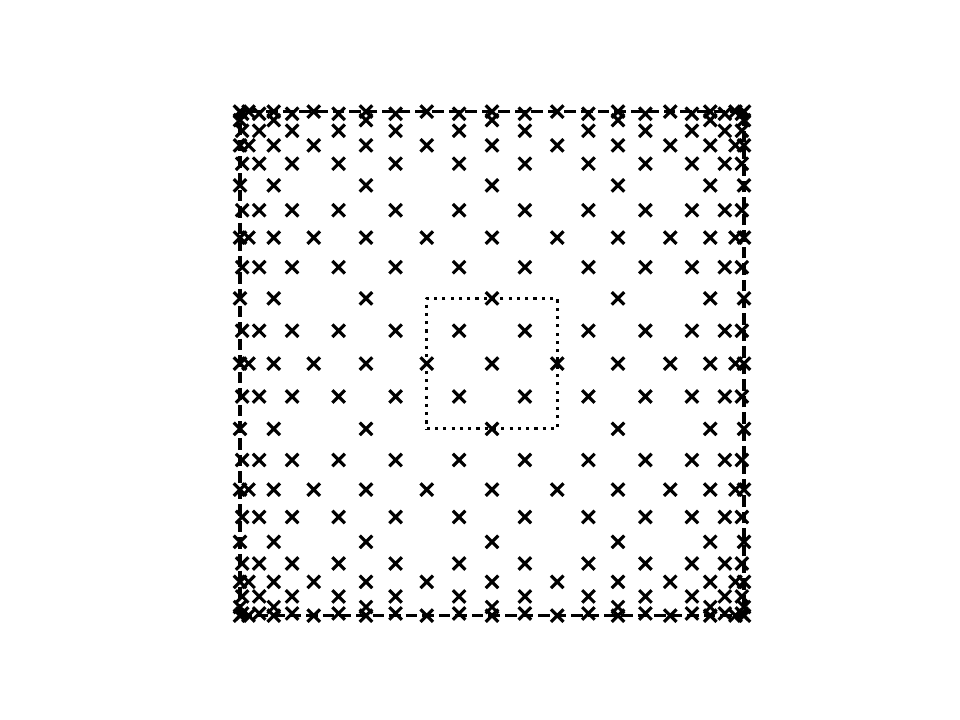}
    \includegraphics[width=.3\textwidth]{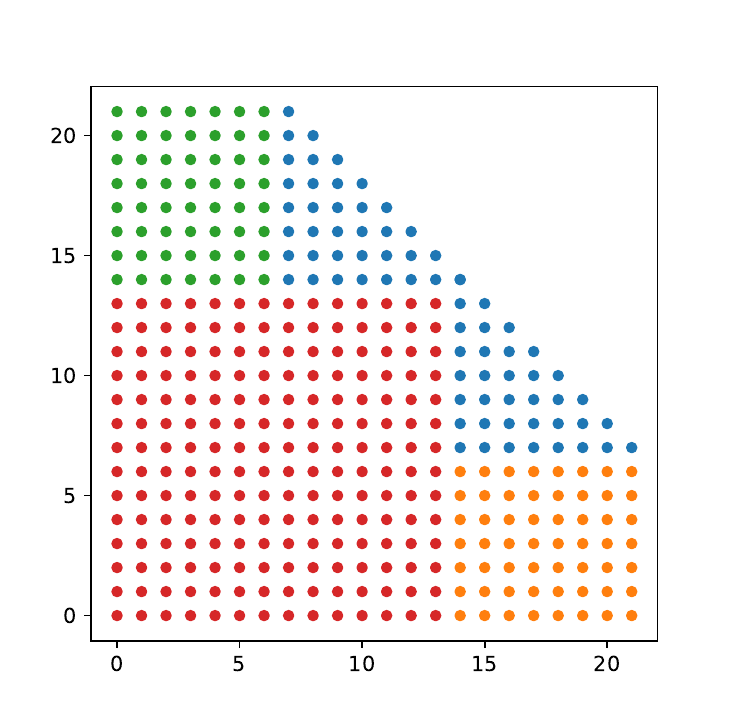}
    \caption{Left: Example of a seven point composite lattice. The dotted box contains a single unit cell. Right: An example index set of the basis for a seven-point composite lattice. Coefficients whose indices are plotted with matching colors are stored in a single contiguous storage array.}
    \label{fig:enter-label}
\end{figure}
Bravais lattices are fully characterized through the orbit of one point under a set of translations. Composite lattices extend this notion by instead considering a set of points within an asymmetric unit cell.
\begin{definition}[Composite Lattice]
    A $d$-dimensional composite lattice on the reflected torus consists of a set of translations generated by $d$ vectors satisfying the same conditions as in \cref{bravais-conditions} along with a set of $n$ points that are not separated by lattice vectors. The composite lattice is the union of the lattices produced by the orbit of these $n$ points. The unit cell of a composite lattice consists of the nearest neighbor cell of a point at the intersection of $d$ hyperplanes of symmetry with respect to the Bravais lattice generated by the orbit of that point under the translational symmetry of the composite lattice.
\end{definition}
Bravais lattices are limited in their achievable Euclidean degree efficiency, with only a small subset of lattices having useful interpolation properties in a given dimension. Composite lattices allow for lattices which approach theoretically optimal efficiency without sacrificing the $O(n \log n)$ efficiency of a Bravais lattice. The primary source of difficulty for composite lattices is the polynomial basis for the lattice. As with Bravais lattices, the basis is determined by geometric constraints which dictate the unisolvent basis of the lattice. This is determined through consideration of Brillouin zones:
\begin{definition}[Brillouin Zone]
    Given a Bravais lattice $L$, consider the reciprocal lattice. Given a point $p_i$ in the basis space for $\mathbb{S}^d_1$ consider the set of points produced by adding some element $\ell_j$ of the reciprocal. This set is partially ordered with respect to $\lVert p_i + \ell_j \rVert_2$. The set of all points $p_i + \ell_j$ which can be the $n$th elements of such a set make up the $n$th Brillouin zone of $L$. When multiple elements of a set have the same $L_2$ norm, these points are the boundaries of multiple Brillouin zones. The first Brillouin zone of a lattice is the optimal basis for that lattice in Euclidean degree, the union of the first $n$ Brillouin zones are the optimal basis for an $n$-point composite lattice. For expedience we will refer to the union of the first $n$ Brillouin zones of a lattice as Brillouin unions.
\end{definition}
The above definition considers $\mathbb{S}^d_1$ for expedience, it is trickier to define the Brillouin zone for $\mathbb{HT}^d$, but the outcome is identical to the $\mathbb{S}_1^d$ Brillouin zone restricted to the first quadrant of the dual space. Higher order Brillouin unions for a given lattice tend toward higher Euclidean degree basis efficiency, but this tendency is not monotonic, and Brillouin unions are not generally convex. This causes some difficulty in finding good node sets, but one non-trivial example in 2d is that the 7th Brillouin union of an equal aspect ratio Cartesian lattice is an irregular octagon.

One useful corollary of \cref{thm:cheb_bra} is that every Bravais lattice is a composite lattice with at most $2^{d-1}$ points in the unit cell for some Cartesian lattice. It follows that all composite lattices on $\mathbb{HT}^n$ can be taken to have a Cartesian unit cell.
\section{Optimal Lattices in Two and Three Dimensions}
This section will survey lattices of interest in two and three dimensions. Optimality in the Euclidean degree will be judged according to the Euclidean degree efficiency defined in \cref{eq:gen_eff}. Because this measure is a ratio of the largest ball contained in the basis to the basis size, we can determine a general principle for finding the best lattice. The radius of the largest ball in the basis for a Bravais lattice is half the distance between nearest neighbors of the reciprocal lattice. The volume of the nearest neighbor cell is the volume of a ball combined with the empty space nearest it. These definitions are equivalent to the density of a packing of equal spheres, making the search for optimal lattices equivalent to the close packing problem in the dual space to $\mathbb{HT}^n$. Due to symmetry and periodicity restrictions, it is not always possible to translate the close packing lattice in $\mathbb{R}^n$ to $\mathbb{HT}^n$. 
Another point to consider for lattices is their representation of boundary conditions. The value of a function on the boundaries of a cubic domain are given by the projection of the basis onto a $d-1$ dimensional coordinate hyperplane. The efficiency of the basis in this sense may be a more pertinent consideration for solving boundary value problems.
\subsection{2d} \label{sec:hex}
\begin{figure}[H]
\centering
\includegraphics[width=\linewidth]{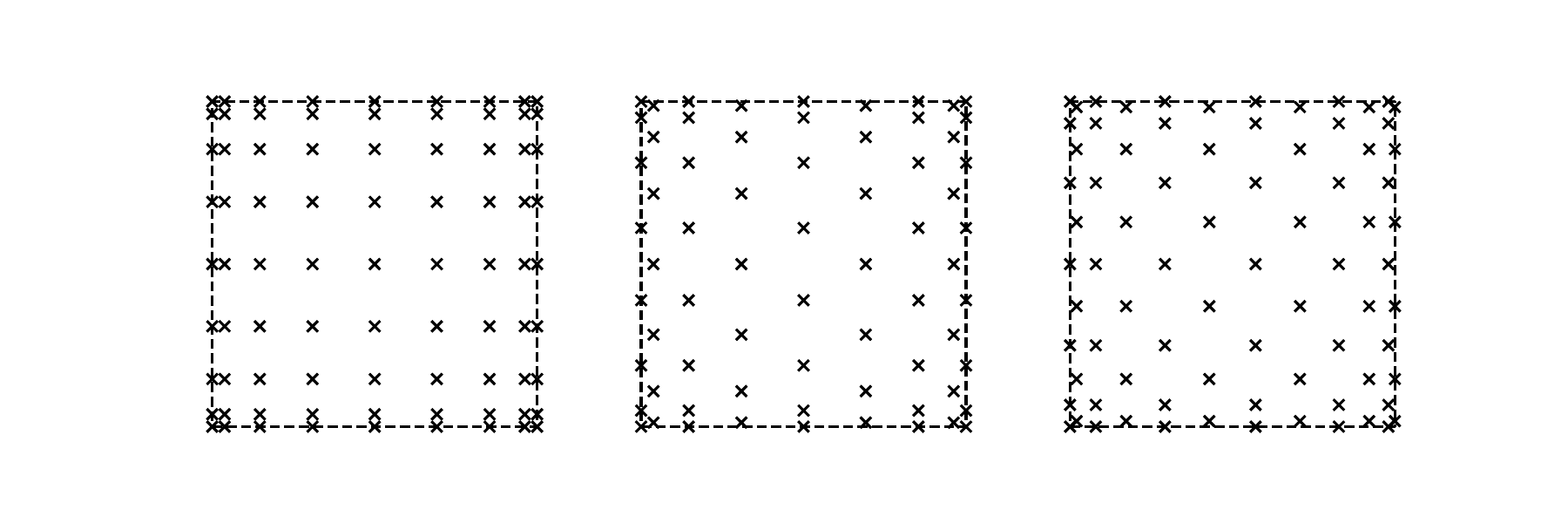}
\caption{From left to right: Cartesian, Hexagonal, and Padua lattices of Euclidean degree 8. The point counts are 81, 68, and 78 respectively.}
\label{fig:hex-lat}
\end{figure}
In 2d, the close packing lattice is the self-dual hexagonal lattice. This has lattice vectors $[1, 0]^T, \frac{1}{2}[1, \sqrt{3}]^T$. Because the reciprocal lattice vectors must be integers, it is not possible make an exact hexagonal lattice. A close approximation can be made with vectors $[8n,0]^T [4n, 7n]^T$. The first lattice vector is slightly shorter than the second, so a circle inscribed in the basis set would have a radius of $4n$. The calculated basis efficiency would be:
\begin{align}
E_{hex} = \frac{\pi 4^2}{\left| \begin{matrix} 8 & 0 \\ 4 & 7 \end{matrix} \right|} \notag \\
E_{hex} = \frac{2 \pi}{7} \approx .898
\end{align}
Comparing this to the 2d Cartesian basis efficiency (\cref{eq:2_eff}), we see that the approximated hex lattice attains the same Euclidean degree with $\frac{7}{8}$ times as many points as a tensor product lattice. At high resolutions, it would be possible to get slightly better approximations; but this would make for an improvement of less than $.01$ in the efficiency factor. 

In 2 dimensions, it is possible to construct a Bravais lattice which is unisolvent for total degree polynomials. This was discovered in 2008 at the university of Padua.\cite{PadOriginal}\cite{pad_interp_generate}\cite{pad_interp_xu_de_marchi} Although the relation to toroidal lattices was not made until sometime after, the Padua points come from the lattice associated with the reciprocal vector $[n, n+1]^T$ and its reflections across the coordinate axes. The $\ell_1$ and $\ell_\infty$ unit balls are geometrically identical in 2 dimensions, so the Padua points are no more efficient than a tensor product lattice in Euclidean degree. See \cref{fig:hex-lat} for a comparison of the lattices, and \cref{fig:recip} for a comparison of the Hex and Padua bases.
Unlike the Hexagonal and Cartesian lattices, the Padua points are more efficient at representations along the boundaries by a factor of $\sqrt{2}$. This makes the Padua points optimal for some boundary value problems.

\subsection{3d} \label{sec:b-eff}
In 3d, there are infinitely many distinct close packings, but only one is a Bravais lattice. This is commonly referred to as the face-centered cubic (FCC) lattice. Fortunately, this lattice can be exactly expressed with integer vectors: $[1, 1, 0]^T, [1, 0, 1]^T, [0, 1, 1]^T$. It is not self-dual; a basis with FCC symmetry is dual the to body-centered cubic (BCC) lattice with vectors given by some multiples of $[-1, 1, 1]^T, [1, -1, 1]^T, [1, 1, -1]^T$. Both of these lattices are more efficient than the Cartesian lattice, so we will calculate the efficiency factors for both:
\begin{align}
E_{BCC} &= \frac{4\pi}{3\sqrt{2}^3 \left|\begin{matrix} 0 & 1 & 1 \\ 1 & 0 & 1 \\ 1 & 1 & 0 \end{matrix} \right|} = \frac{\pi}{3 \sqrt{2}} \approx .740 \\
E_{FCC} &= \frac{\pi \sqrt{3}^3}{6 \left| \begin{matrix} -1 & 1 & 1 \\ 1 & -1 & 1 \\ 1 & 1 & -1 \end{matrix} \right|} = \frac{\pi \sqrt{3}}{8} \approx .680
\end{align}
Compared to the Cartesian lattice (see \cref{eq:3_eff}), the BCC attains the same error with $\frac{1}{\sqrt{2}} \approx .71$ of the points. The FCC has a worse efficiency, saving points by a factor of $\frac{4}{3\sqrt{3}} \approx .77$. The BCC lattice also happens to be optimal for total degree, as observed in \cite{total_degree_cube_demarchi}. However, it is possible to save about $\frac{3}{4}$ of the point count using more efficient composite lattices.

The FCC has a slight advantage in its ability to resolve functions representing boundary conditions. Given a set of basis functions in $d$ dimensions, the basis for functions restricted to one boundary can be thought of as the projection of the basis indices along one axis. In this way, it is possible to consider the Euclidean degree efficiency of a lattice in representing boundary conditions separate from its efficiency in its volume. The FCC has a higher order basis in Euclidean degree along its boundaries than in its interior by a factor of $\frac{2}{\sqrt{3}}$ making its boundary efficiency $~.785$. This makes it slightly more efficient than the BCC for boundary representations.

\subsection{Higher Dimensions}
This process can be readily generalized to higher dimensions. The ratio of efficiency from the Cartesian lattice to the closest packing admitted by the Chebyshev torus tends to increase with dimension. Using the same methods as previously, one can show that the D4 lattice is exactly twice as efficient as the 4 dimensional Cartesian lattice. At the same time, the closest packing lattice itself becomes exponentially less efficient than the theoretical limit. As well, some close packing lattices do not respect the symmetry of the torus. The 24-dimensional leech lattice for example has no hyperplane symmetry, and so there is no associated polynomial-collocation method.

\subsection{Composite Lattices}
In principle, composite lattices can be made to be arbitrarily efficient. However, this sacrifices some computational efficiency, as the lack of translational symmetry at the level of the unit cell means the Vandermonde matrix cannot generally be compressed at that level. For a $p$ point composite lattice with $n$ points total, the computational cost of interpolation is $O(p^2n + n\log n)$. Due to the $p^2$ complexity in interpolating with composite lattices, composite lattices with large $p$ may be impractical at low resolution.

At present, the only non-trivial composite lattice with a convex basis is the $7$-point 2d lattice shown in \cref{fig:enter-label}. This lattice has a basis efficiency of about $.898$, exactly the same as the approximate hexagonal lattice (\cref{sec:hex}). This lattice is included in the numerical results mostly as a proof of concept for composite lattices in general. One concrete benefit to this lattice is its improved boundary efficiency over the hexagonal lattice. (This concept is described briefly in \cref{sec:b-eff}).

\section{Algorithms}

Previous papers on the Padua points and BCC lattice have proposed algorithms~\cite{PadAlgo2, fcc_integrate_xu} which can compute coefficients in $O(n \log n)$ time or could be adapted to achieve this. However, these methods involve computing DCTs over larger sets than the points used. This means that so long as evaluating the function to be interpolated is not a bottleneck, the more efficient point sets require more computation to find coefficients than a simple tensor product lattice. For this reason, we propose two novel methods for generalizing the tensor product DCT to non-tensor product lattices. The intent of these algorithms is to match the computational efficiency of the DCT computing coefficients on a tensor product Chebyshev lattice. This way, a well-implemented interpolation scheme can expect computational performance reflecting the efficiency of the basis. 

Both algorithms we propose use Cartesian sublattices as a basic unit. Both Bravais and composite lattices must have some Cartesian translational symmetry due to the toroidal geometry of the domain. For Bravais lattices, the hyperplane symmetry requirement means that any lattice vector can be added with one of its reflections to produce an axis aligned vector. It follows from this that all Bravais Chebyshev lattices can be written as a union of at most $2^{d - 1}$ Cartesian lattices. Composite lattices do not have any upper bound on the number of sublattices, but must have some Cartesian lattice symmetry. The Padua points, Hex Lattice, and BCC lattice are composed of two sublattices; the FCC lattice is composed of 4; and the Octagonal composite lattice is composed of 7. The majority of computation for the algorithms we propose consists of computing DCTs or FFTs on these sublattices. The rest is additional $O(n)$ processing, similar in computation to one level of an FFT.

\subsection{Interleaved Fast Fourier Transform}
\begin{figure}[H]
\centering
\includegraphics[width=.95\linewidth]{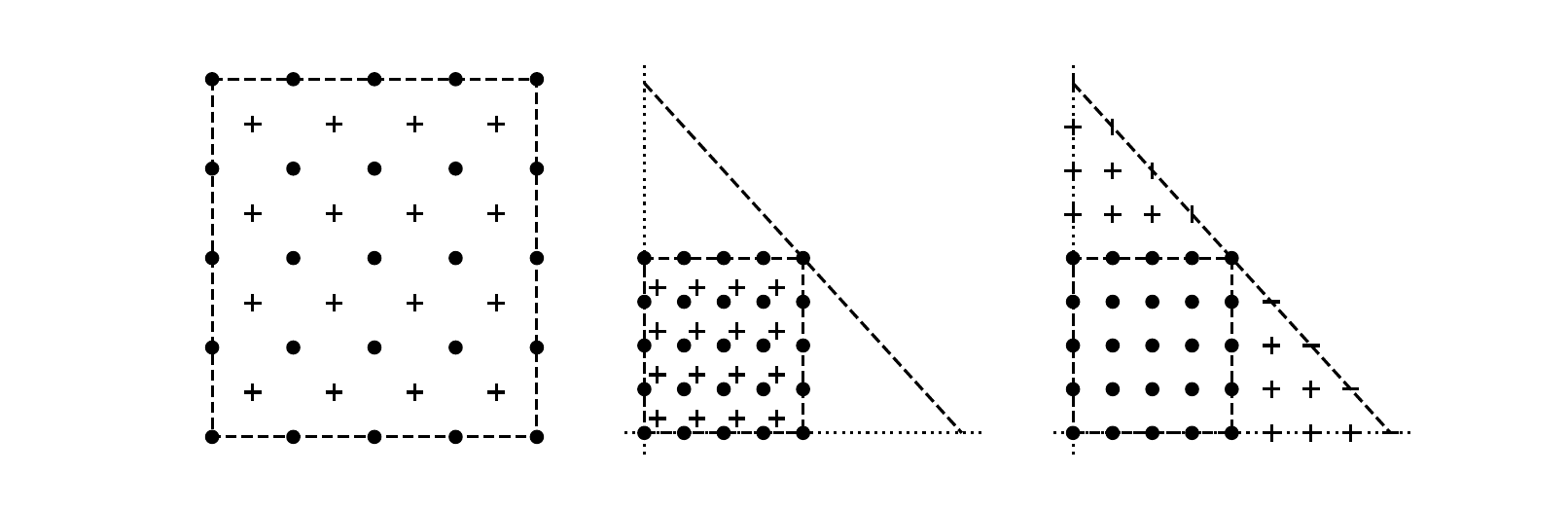}
\caption{A step by step illustration of the IlFFT. Left: Compute the DCT on interleaved lattices, Center: Obtain sets of corresponding coefficients, Right: Use aliasing to obtain coefficients for the optimal Euclidean degree basis for this lattice. Indices marked by lines alias onto different indices with the same Euclidean degree. In this case the basis is taken to contain a linear combination of the two associated polynomials.} \label{pad_dia}
\end{figure}
This algorithm is intended for use when the full node set can be separated into equal lattices with DCT $I$ or $II$ boundary conditions in all directions. It can be applied in cases with type $V$ DCTs if necessary.

First, we divide the collocation points into a set of $m$ disjoint tensor product lattices $L_{1,\dots, m}$ with dimensions $(n_1, \dots, n_d)$. For bravais lattices, the separation between the lattices under the mapping to the torus will be either $\frac{\pi}{n_i}$ or 0 in each direction. The algorithm for determining the set of coefficients $c_{i, \dots, m}$ to interpolate a function $f$ is the following:
\begin{algorithm}
\caption{Interleaved Fast Fourier Transform}
\begin{algorithmic}[1]
\Procedure{ILFFT}{f, L}
\ForAll{lattices $L_i$}
\State $F_i = \mathrm{DCT}(f(L_i))$
\EndFor
\ForAll{indices $\mathbf{k}$ in each $F_i$}
\State $v_{\mathbf{k}} = \left[ F_{1, \mathbf{k}} \dots F_{m, \mathbf{k}} \right]$
\State $M = \mathrm{AliasingMatrix}(\mathbf{k})$
\State $\left[c_{1,\mathbf{k}} \dots c_{m, \mathbf{k}} \right] = M v_{\mathbf{k}}$
\EndFor
\EndProcedure
\end{algorithmic}
\end{algorithm}
A few pieces of this algorithm are left undetermined in the general case. First, the DCT evaluated for each lattice must be type I or II as appropriate for each axis' reflection conditions. Second, the "AliasingMatrix" function produces a matrix $M$ which is dependent on the index $\mathbf{k}$. For Bravais lattices, $M$ is a unitary matrix with entries $\pm \frac{1}{m}$. In all cases, $M$ can takes on a fixed set of values dependent on the structure of the lattice and independent of the resolution. One method to determine the set of aliasing matrices is to directly evaluate sets of mutually aliasing polynomials and find the associated coefficients at the component lattices. This gives the inverse of the aliasing matrix used in the algorithm. This method is especially useful for composite lattices, for which it can be difficult to determine the aliasing matrices analytically.

To evaluate the transform of a function $f$, we first evaluate $f\left(L_j^{\mathbf{x}} \right) = y_j^{\mathbf{x}}$ for each point $\mathbf{x}$ at each lattice $L_j$. Using the appropriate DCTs we can compute a set of coefficient lattices: $c_j = \mathrm{DCT}(f_j)$. Each coefficient lattice $c_j$ will have coefficients $c_j^{\mathbf{k}}$ corresponding to polynomials of the same order. (With the exception that the highest order modes of type $I$ lattices do not have corresponding coefficients in type $II$ lattices). For each polynomial index $\mathbf{k}$ we select the $m$ lowest order polynomial indices which alias onto $\mathbf{k}$, $\mathbf{k}'_{1,\dots, m}$. (Note: the lowest order element of each set $\mathbf{k}'$ is always $\mathbf{k}$). We can construct a set of linear systems:
\begin{equation}
 \left[ \begin{matrix} c_1^{\mathbf{k}} \\ \vdots \\ c_m^{\mathbf{k}} \end{matrix}\right] = M_{\mathbf{k}} \left[ \begin{matrix} c^{\mathbf{\ell}_1} \\ \vdots \\ c^{\mathbf{\ell}_m} \end{matrix}\right]
\end{equation}
For bravais lattices, the matrix $M_{\mathbf{k}}$ is unitary with all elements being $\pm \frac{1}{m}$. The subscript denotes the fact the $M$ is in fact a piecewise constant function of $\mathbf{k}$. The entries $(M_\mathbf{k})_{i,j}$ are given by the value of $c_i^\mathbf{k}$ when evaluated for $T_{\mathbf{\ell}_j}$.

The matrices $M_\mathbf{k}$ have important implications in hypothetical contexts involving composite lattices (mentioned in future work). The DCT is unitary, so the matrices $M_\mathbf{k}$ determine the norm of the Vandermonde matrix.

The total operation count of this algorithm is dominated by $m$ DCTs computed over the lattices $L_i$. The remaining operations consist of matrix multiplications by $m \times m$ matrices for each set of corresponding coefficients. The asymptotic operation count for this algorithm over a set of $mn$ points is then $O(m n \mathrm{log}_2 (n) + m^2n)$. The value of $m$ is fixed by the lattice structure; for large $n$, the bottleneck becomes the initial step of computing an DCT for each $L_i$.

\subsection{Heterogeneous Fast Fourier Transform}
\begin{figure}[H]
\centering
\includegraphics[width=.95\linewidth]{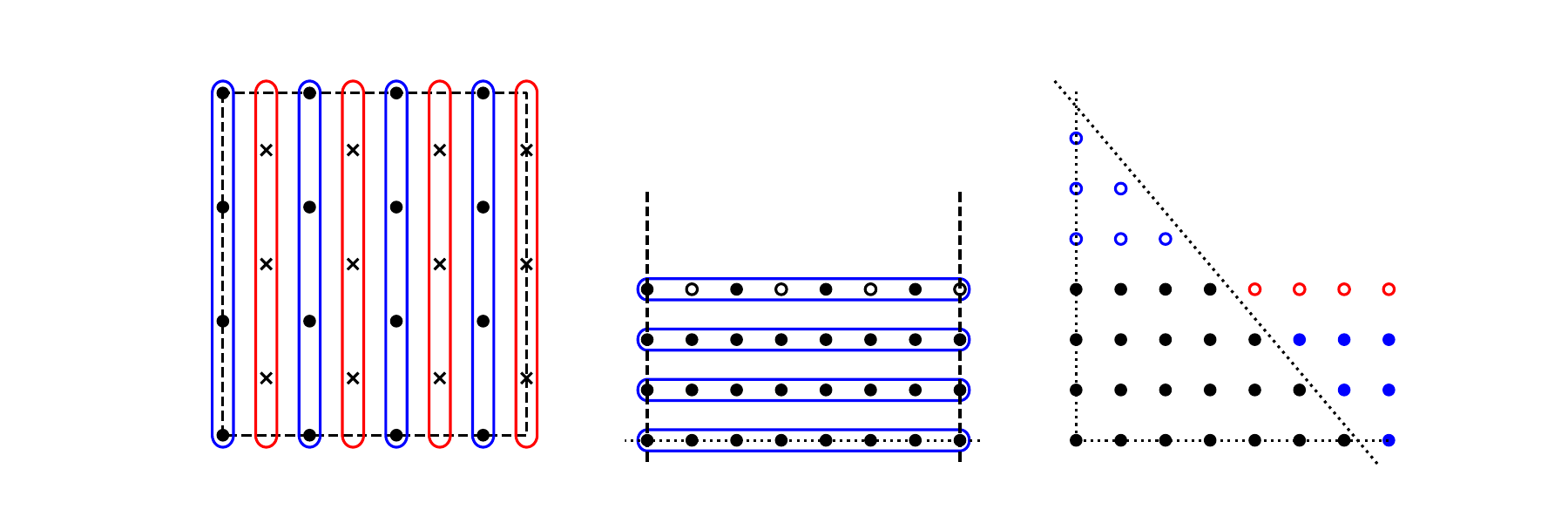}
\caption{A step by step illustration of the HFFT. Left: Compute DCT I and II on alternating columns, Center: Compute DCT I on rows (unfilled circles correspond to coefficients which are always 0), Right: Use aliasing to remap $2n \times n$ coefficients to the total degree basis set. (Red circles correspond to redundant coefficients. Blue dot coefficients outside the basis map to blue circles inside.)} \label{pad_dia}
\end{figure}
The Interleaved FFT is a generic algorithm that can always be applied in principle. However, in some cases the SFFT may require use of a type V DCT. In particular, the Padua points have type V boundary conditions along one axis. As remarked upon in \cref{sec:DCT}, the type V DCT is not commonly implemented in FFT libraries. To compute an ILFFT without a specially implemented DCT it would be necessary to compute a real FFT on an array with reflected data. This is a meaningful time loss, which can be avoided by circumventing the need for a type V DCT.\\
Figure (\ref{pad_dia}) illustrates the steps of this method. Note in the first diagram that transforming the sublattices horizontally would require a type V DCT. To avoid this, we transform the two sublattices using the DCT type I and II vertically first. This algorithm can be generalized to a variety of cases, but for simplicity we will describe the process for a Padua lattice. Without loss of generality, we can assume that the Padua lattice is the union of two Cartesian lattices $L_{1,2}$ of type $\mathrm{I} \times \mathrm{V}_-$ and $\mathrm{II} \times \mathrm{V}_+$. We take $f$ to be a function defined over the domain.
\begin{algorithm}[H]
\caption{Heterogeneous Fast Fourier Transform}
\begin{algorithmic}[1]
    \Procedure{HFFT}{$f$, $L_1$, $L_2$}
    \State $k \leftarrow \mathrm{Total \, Degree}(L_1, L_2)$
    \State $\chi_1 \leftarrow \mathrm{DCT}(f(L_1), \mathrm{Type\, I}, \mathrm{axis}=1)$
    \State $\chi_2 \leftarrow \mathrm{DCT}(f(L_2), \mathrm{Type\, II}, \mathrm{axis}=1)$
    \State $\chi[::2] \leftarrow \chi_1$
    \State $\chi[1::2] \leftarrow \chi_2$
    \State {$\chi \leftarrow \mathrm{DCT}(\chi, \mathrm{Type\, I}, \mathrm{axis}=2)$}
    \ForAll{$\chi [i, j]$}    \Comment{Now we use aliasing math to find the correct index for each coefficient.}
    \If {i + j > k}
    \State $c[k+1 - i, k - j] \leftarrow \chi [i, j]$
    \Else
    \State $c[i, j] \leftarrow \chi[i,j]$
    \EndIf
    \EndFor
    \EndProcedure
\end{algorithmic}
\end{algorithm}
Note that for this algorithm requires that there is some way of ordering transforms to circumvent the DCT V. In most cases, it is more practical to choose lattices that are compatible with the ILFFT. For the specific case of the Padua points, this method is recommended.

It is difficult to precisely determine a flop count for this algorithm, because the DCTs computed depend on the structure of the lattice in a way which may be complicated. However the size of the DCT computed is $n + O(n^{(d - 1)/d})$ at all steps and is followed by an $O(n)$ aliasing correction. This suggests that the algorithm is asymptotically equal to a $d$-dimensional DCT computed over $n$ points.
\subsection{Differentiation}
One of the most notable disadvantages of non-Cartesian lattices is the absence of tensor product symmetry. This is most notable in the construction of differential operators. For Cartesian lattices, differentiation matrices are sparse requiring $O\left(n^{\frac{d + 1}{d}}\right)$ time to compute derivatives directly. For other lattices, this is not the case and direct differentiation requires $O\left(n^2\right)$ time. \\
However, algorithms that use the basis representation to differentiate are still $O(n \log n)$. The most efficient way to compute a derivative in basis space is to backsolve using the sparse recurrence relation:
\begin{equation}
\frac{\mathrm{d}}{\mathrm{d}x} T_n = 2n T_{n - 1} - \frac{n}{n - 2}\frac{\mathrm{d}}{\mathrm{d}x} T_{n - 2}
\end{equation}
The unusual structure of the basis complicates differentiation in this manner somewhat. The method we suggest is to compute a lexical sort which takes the axis of differentiation as the least significant component. The sorting step can be performed once upon initializing a lattice. Because the structure of the basis is known a priori it is possible to use a series of $2d - 1$ stable bucket sorts to produce appropriate orderings for each axis of differentiation.
We store the components in a vector according to this order. Then for each sequence of coefficients whose indices differ only in the least significant index, we can apply the recurrence relation to compute the derivative along an axis. This process takes $O(n)$ flops. In the limit of large $n$ differentiation is dominated by the $O(n \log n)$ cost of transforming a function from its values at the collocation points to its basis coefficients.
\subsection{Integration}
While polynomial integration is most often done using Gauss-Legendre stencils, it is well-known that for broad classes of functions, Clenshaw-Curtis quadrature achieves similar levels of accuracy.\cite{trefGQorCC}\cite{trefExactnessQuadrature} With a more efficient basis, Clenshaw-Curtis quadrature can outperform Gauss-Legendre.
Integrals and integral operators can be computed directly using the Chebyshev basis. We will denote by $I$ the linear functional in the Chebyshev coefficients representing the integral over the unit interval. Given a function $f$ defined on the unit interval and the operator $T$ mapping $f$ to its Chebyshev coefficients, we have:
\begin{align}
\int_{[-1,1]^d} f(x) \, \mathrm{d}x &\approx \langle I, T f \rangle \notag \\
\int_{[-1,1]^d} f(x) \, \mathrm{d}x &\approx \langle I T^*, f \rangle
\end{align}
In words, we can compute the integral stencil by applying the adjoint transform operator to a vector representing the integral of the Chebyshev basis. In the case of Bravais lattices, the adjoint transform is the inverse up to scaling. For composite lattices, the adjoint and inverse operators are distinct but are still similar algorithms. Once an integral stencil is computed, applying it to a function is an $O(n)$ operation equivalent to a vector dot product.
\section{Results}
\subsection{Methodology}
It is common in the literature to present quality interpolation methods by demonstrating convergence for particular test functions. In this case, choosing a small subset of test functions is inadequate, because it is possible to contrive functions which will allow for any arbitrary basis to outperform any other. Simply by choosing functions whose Chebyshev coefficients decay according to $e^{-c \lVert k \rVert}$ for an arbitrary choice of norm on $k$, we can obtain virtually any convergence results we want. Trefethen's initial observation regarding Euclidean degree was made considering radially symmetric functions, so this would be a tempting route to consider. However, this could also be seen as a contrivance, since functions of interest may not be expected to have any particular symmetry in general.

\begin{figure}
\centering
\includegraphics[width=.99\textwidth]{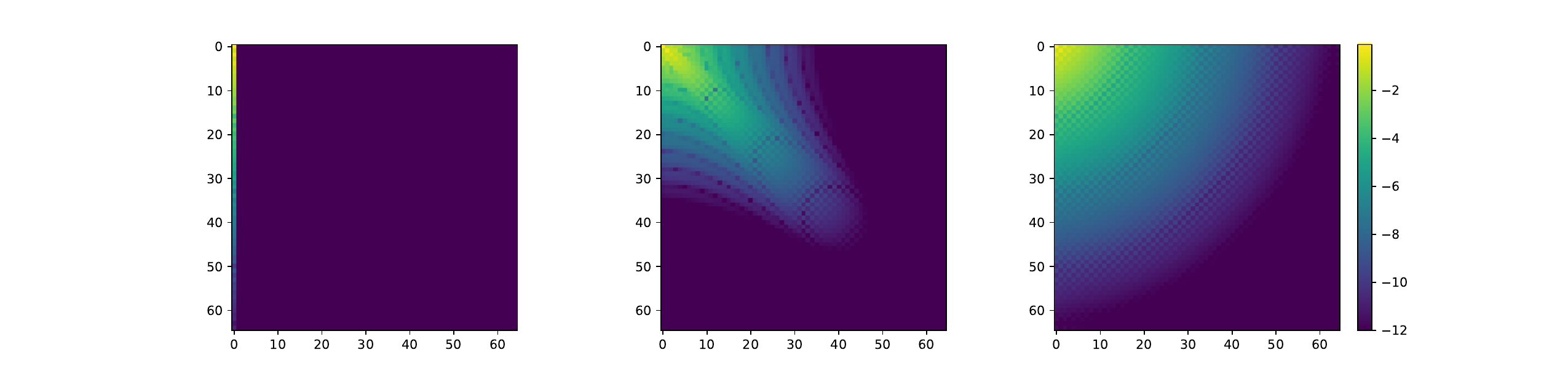}
\caption{Magnitudes of Chebyshev coefficients from left to right: $y$-axis aligned ridge function, the same function rotated $\frac{\pi}{4}$ radians, $L_2$ norm for this function over a full revolution}
\label{fig:ridge_func}
\end{figure}
The method we will use follows from considering the cubic domain as a subset of some larger domain, such as a patch on a surface or a subdomain in a domain decomposition scheme. In this case, one could consider different positions and orientations of a given cube with respect to a function defined on the full domain. Then we can consider the expected error over different orientations of a test function, effectively averaging the area over a family of equally representative test functions rather than a single one. 
If we consider \cref{fig:ridge_func}, the plot depicts the magnitude of polynomial coefficients for an analytic function which varies only in one direction. The orientation of this function changes the distribution of coefficients, but the Euclidean degree bound remains mostly consistent. This property is suggested by the proof in \cite{trefEucDegree} which proves a Euclidean degree coefficient bound for individual functions, but which can be readily generalized for families of functions related by rotation about the center of a cubic domain.

\subsection{Integration}
\begin{figure}
\centering
\includegraphics[width=.95\textwidth]{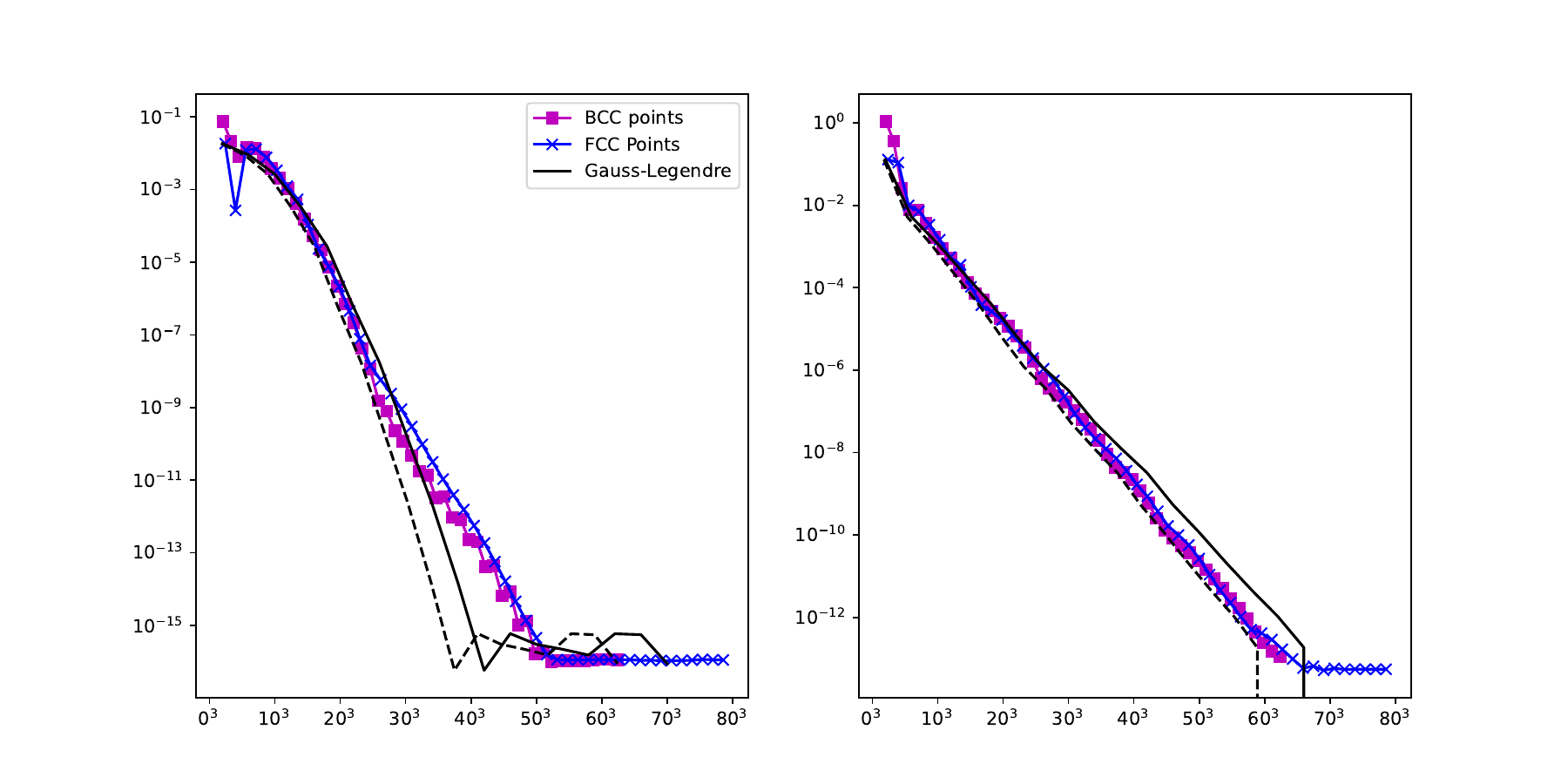}
\includegraphics[width=.45\textwidth]{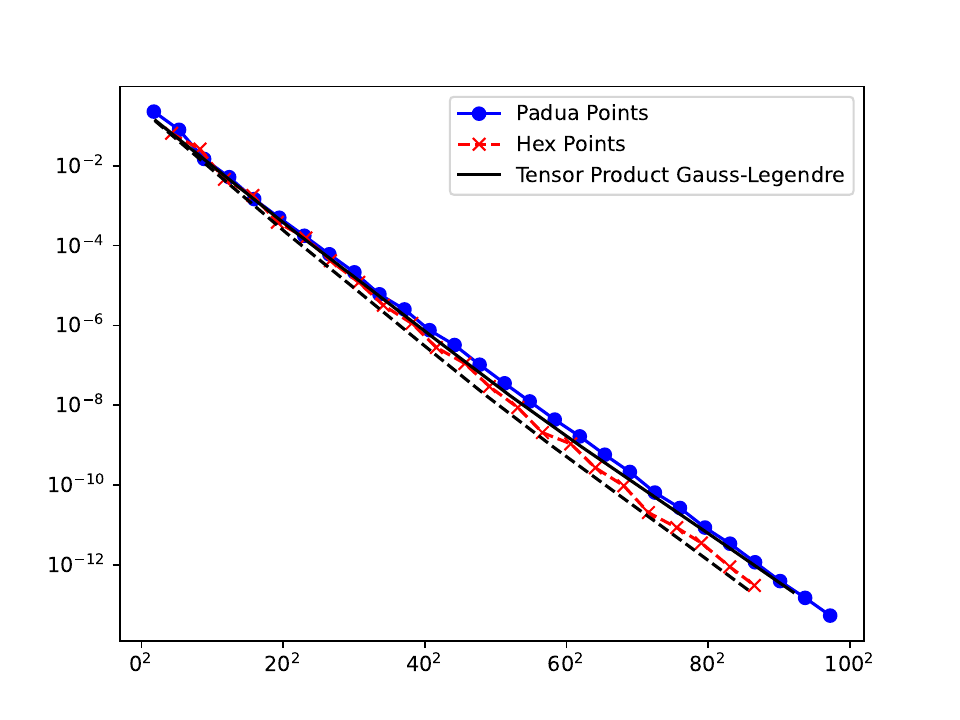}

\caption{Integration results comparing a few non-tensor product lattices to Gauss-Legendre in 3 dimensions (top) and 2 dimensions (bottom). Note that the x-scales are not linear. Dotted lines denote the expected rate of convergence for the most efficient Euclidean degree lattice based on tensor product Gauss-Legendre.}\label{fig:int_results}
\end{figure}
It is not common to use Clenshaw-Curtis quadrature in one dimension due to the availability of Gauss-Legendre quadrature. Gauss-Legendre has the guarantee of accurately integrating polynomials up to order $2n - 1$ with $n$ evaluation points, where Clenshaw-Curtis cannot only gets order $n - 1$ polynomials exactly. This would seem to indicate that Gauss-Legendre should wildly outperform Clenshaw-Curtis, but this is not always seen in practice~\cite{trefExactnessQuadrature}.

There are theoretical results explaining why Clenshaw-Curtis can achieve order $2n$ convergence in some circumstances~\cite{trefGQorCC}. Numerical experiments in 1d show that Clenshaw-Curtis exhibits an elbow phenomenon at which the rate of convergence seems to suddenly reduce by half. This elbow often occurs at high enough resolution as to have little significance for practical purposes. This same phenomenon can be seen in \cref{fig:int_results} in the case of 3d Clenshaw-curtis integrating an entire function. The elbow occurs for both of the non-Cartesian Chebyshev lattices when the relative error is of order between $10^{-7}$ and $10^{-8}$. For integrands that are merely analytic, this elbow may not occur at all before an integral reaches machine precision.

In \cref{fig:int_results}, we see comparisons of non-tensor product Clenshaw-Curtis and Gauss-Legendre. Integral errors are subject to wide variation, so these plots were made by averaging the error over a single asymmetric integral performed after many unitary coordinate transformations. From top to bottom, the integrands were of the form: $e^{-\|x - a\|^2}, \frac{1}{1 + \|x - a\|},$. This is intended to indicate the performance with entire and analytic functions. In each plot, there is a dotted line which indicates the theoretical performance of a lattice with the highest known Euclidean degree efficiency performing as well as the Gauss-Legendre stencil of the same degree. In two dimensions, achieving this rate of convergence would reduce the number of points needed to achieve a given error by about $12 \%$. In three dimensions, $29\%$.

We can see that Euclidean degree basis efficiency has an impact on the performance. In most cases, the Padua points converge at a similar rate to Gauss-Legendre, as both have the same Euclidean degree at similar point counts. The Hex, BCC, and FCC lattices often achieved higher accuracy at moderate to high resolution, usually converging at close to the rate that would be expected for a Gauss-Legendre stencil with the same Euclidean degree efficiency. However, for low resolutions we see that Gauss-Legendre tends to perform better. This is particularly troublesome for the hex points which require high resolution to get close to their expected level of efficiency.

\subsection{Interpolation}
\begin{figure}
\centering
\includegraphics[width=.95\textwidth]{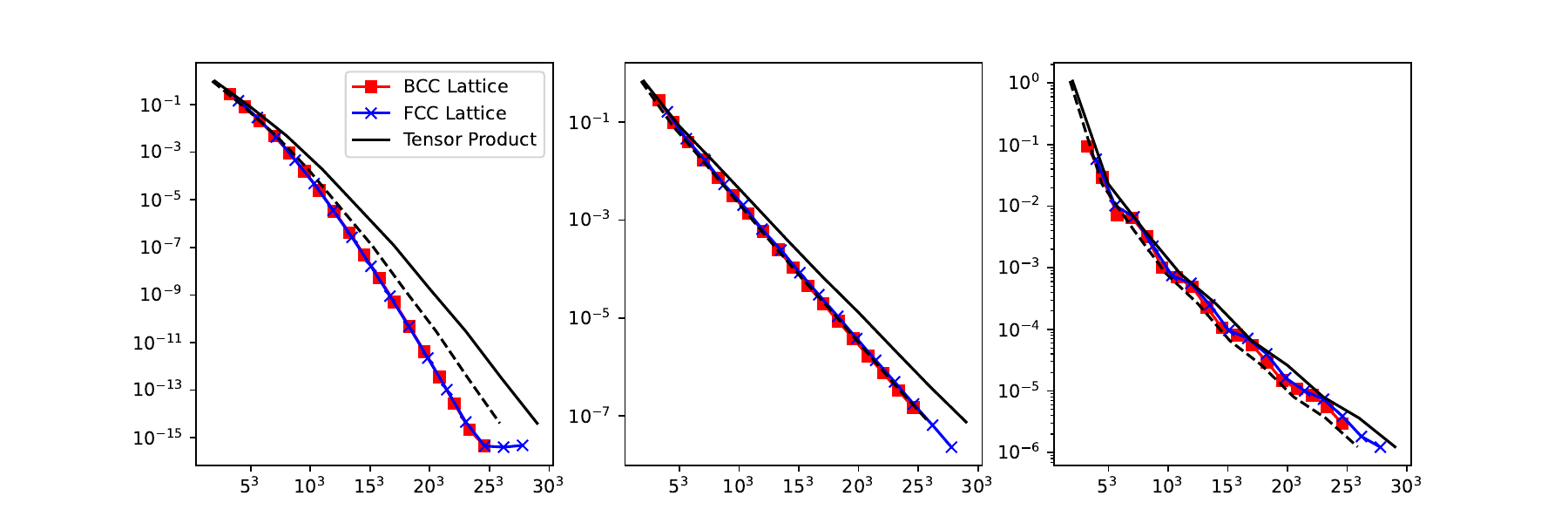}
\includegraphics[width=.95\textwidth]{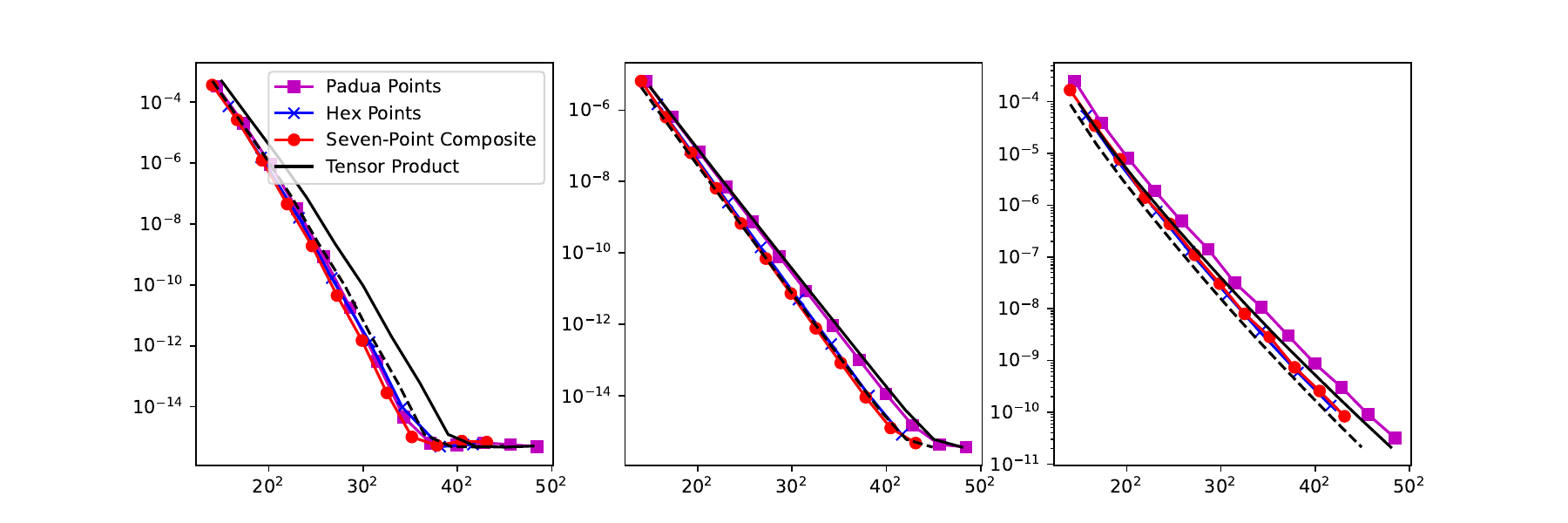}
\caption{Calculated $L_2$ error comparing the most efficient lattices in 2 and 3 dimensions to the tensor product lattices. Dotted line shows the expected rate of convergence based on Euclidean degree efficiency. From left to right, the chosen test functions are entire, analytic, and $C_\infty$ with an essential singularity in the domain.\label{fig:interp_results}}
\end{figure}
Relative to integration, interpolation produces much more consistent errors. There is also no method like Gauss-Legendre that might be expected to perform better than Chebyshev methods for interpolation. For this reason, our interpolation results are much more straightforward. We see that for the average case $L_2$ error, Euclidean degree efficiency is typically the most important factor. The convergence plots we have attached here show convergence for an off-center Gaussian and Runge function rotated and averaged. The resulting convergence plots conform extremely closely to expectation except for very low resolutions.

We see an interesting phenomenon occur when comparing the rates of convergence for different functions. In \cref{fig:interp_results}, we see that the efficient 3d lattices converge much faster than expected for an entire function, but at very nearly the expected rate for an analytic function. Something similar occurs in 2d, and by noting the performance of the Padua points in this case we can see what is happening. For functions with fast fall-off in their coefficients, we see that convergence is related more closely to total degree than Euclidean degree efficiency, while for more difficult to resolve functions, Euclidean degree efficiency is a good indicator of the convergence rate. In the most extreme case tested, interpolating a $C_\infty$ function, the Cartesian lattice outperforms expectation. 

This suggests that while Euclidean degree efficiency is a good indicator for many practical cases, it is not a tight bound on the error in every case.

\section{Future Work}
This project leaves several avenues open for development. Currently, we have only implemented one composite lattice in two dimensions. A viable 3 dimensional composite could produce a meaningful improvement over the best 3d Bravais lattices. To find a viable composite we would need to take a survey of Brillouin zones of Cartesian lattices in 3 dimensions with differing aspect ratios. Once a good candidate is found, we could find a good stencil and implement it.

Another direction is to improve integration accuracy, particularly for low resolutions. Although it has not been included in this paper, we have seen some promise from integral stencils based on Chebyshev polynomials of the second kind. The method for doing this is a simple extension of the methods shown here along with a linear rescaling procedure to improve aliasing errors. This method could allow Clenshaw-Curtis based integration to compete with Gauss-Legendre at low resolutions, especially in 3 dimensions.

Although the preliminary results shown here show potential for this method, one issue in applying it is the lack of optimization in its current implementation. Many of the necessary procedures are currently coded in pure python, which is well known to produce slower results relative to lower level languages. A re-implementation of the current codebase to avoid large python loops would improve the overall code performance and allow for direct speed comparisons to existing polynomial methods.
\section{Acknowledgements}
We sincerely thank Travis Askham for his contributions both in inspiring our initial research into total degree efficient lattices and for his feedback throughout the writing process.
\bibliographystyle{unsrt}  
\bibliography{references}

\end{document}